\theoremstyle{plain}
\newtheorem{theorem}{Theorem}[section]
\newtheorem{corollary}[theorem]{Corollary}
\newtheorem{lemma}[theorem]{Lemma}
\newtheorem{proposition}[theorem]{Proposition}
\theoremstyle{remark}
\newtheorem{remark}{Remark}
\theoremstyle{definition}
\newtheorem{definition}{Definition}
\def\Im{\text{Im}\,}
\def\Re{\text{Re}\,}
\def\uno{\mathbb 1}
\def\0{\mathit 0}
\def\CC{\mathbb C}
\def\RR{\mathbb R}
\def\MM{\mathbb M}
\def\multi#1{{\scriptsize \begin{array}{c} #1 \end{array}}}
\def\t{\text{\itshape{\textsf{T}}}}
\def\({\left(}	\def\){\right)}
\def\K{{\mathcal K}} \def\A{{\mathcal A}}
\def\sqmatrix#1#2{\left[\begin{array}{#1} #2 \end{array}\right]}
\def\me{\mathcal }
\def\ts#1{\textstyle{#1}}
\begin{document}

	\author{Francesco Tudisco \\ \footnotesize{Department of Mathematics and Computer Science,}\\ \footnotesize{Saarland University, Saarbr\"ucken, Germany.}}
 \date{ }

	\title{A note on certain ergodicity coefficients}

\maketitle
	\abstract{We investigate two ergodicity coefficients $\phi_{\|\, \|}$ and $\tau_{n-1}$,   originally introduced to bound the subdominant eigenvalues of nonnegative matrices. 
	The former has been generalized to complex matrices in recent years and several properties for such generalized version have been shown so far. 
	We provide a further result concerning the limit of its powers. Then we propose a generalization of the second coefficient $\tau_{n-1}$ and we show that, under mild conditions, it can be used to recast the eigenvector problem $Ax=x$ as a particular M-matrix linear system, whose coefficient matrix can be defined in terms of the entries of $A$. Such property turns out to generalize the two known equivalent formulations of the Pagerank centrality of a graph. \\
	
	\textbf{Keywords:} Ergodicity coefficients, Eigenvalues, Nonnegative matrices, Linear systems, Pagerank\\
	
	\textbf{MSC 2010: }65F15, 
	15A18, 
	93C05  
	}
%
%

\section{Introduction}
In this note we  discuss measures of
ergodicity from the point of view of linear algebra and in the context of finite Markov chains. Loosely speaking a Markov chain involves a sequence of stochastic matrices, and the chain is ergodic if the product of such matrices converges. 
Coefficients of ergodicity were introduced to estimate how fast, if at all, these
products converge. In the simplest case all factors in the product are identical to the same stochastic
matrix $A$, and, in this case, ergodicity coefficients have been used essentially to bound the subdominant eigenvalues of $A$. Indeed, when $A$ is stochastic, the powers $A^k$ converge to a stochastic matrix all of whose rows are equal, with a convergence rate depending on the magnitude of the subdominant eigenvalue $|\lambda_2|$, whose estimation is therefore extremely interesting.

According to  \cite[\S 1]{seneta-explicit}, ergodicity coefficients can be subdivided into two classes: The first one consists  of coefficients defined in terms of a vector norm $\|\, \|$, maximized over a suitable subspace; the second one is the class of coefficients  defined in terms of a deflation of the original matrix. Consider a nonnegative irreducible matrix $A$,  let $x> \0$ be its dominant eigenvector $Ax = \rho(A)x$ and let $\lambda_2$ denote an eigenvalue of $A$, different from $\rho(A)$, having largest absolute value.  
The following scalar functions
\begin{equation}\label{int1}
\mu_{\| \, \|}(x,A) = \max_{\multi{\|y\|\leq 1 \\ y^\t x = 0 \\ y \in \CC^n}}\|y^\t A\|, \qquad   \tau_{\| \, \|}(x,A)= \max_{\multi{\|y\|\leq 1 \\ y^\t x = 0 \\ y \in \RR^n }}\|y^\t A\|\, ,
\end{equation}
are an example of ergodicity coefficients belonging to the first class. To our knowledge $\mu_{\|\, \|}$ and $\tau_{\|\, \|}$ were introduced  
in \cite{seneta} and \cite{rothblum1}, respectively. A number of relevant properties are investigated in those papers, in particular it is shown that 
\begin{gather}
	|\lambda_2|\leq \mu_{\|\, \|}(x,A)\, , \qquad |\lambda_2|\leq \tau_{\|\, \|}(x,A)\, ,\label{int2} \\
	|\lambda_2|=\lim_{k \to \infty}\mu_{\|\, \|}(x,A^k)^{1/k}\, , \qquad |\lambda_2|=\lim_{k \to \infty}\tau_{\|\, \|}(x,A^k)^{1/k}\, .\label{int3}
\end{gather}
 The coefficients in \eqref{int1} have been studied and extended by various authors afterwards (see e.g.\ \cite{Hartfiel1999,rothblum3,ergodicity-survey,Rhodius1997}). In particular Ipsen and Selee consider in \cite{ergodicity-survey}  a generalized version of the coefficients in \eqref{int1}, defined for any complex matrix $A$ by\footnote{That coefficient is actually denoted in \cite{ergodicity-survey} using the same symbol $\tau_{\| \, \|}$ used in the first pioneering works. We use, though, a different notation for the sake of clearness.} 
	$$\phi_{\| \, \|}(W, A) = \max_{\multi{x \in \CC^n\\ \|x\|\leq 1 \\ W^*x =\0}} \|x^* A\|\,\,  ,$$
being $W$ a given (possibly rectangular) matrix. Several properties of $\phi_{\| \, \|}$ are shown in \cite{ergodicity-survey}, in particular it is proved that $\phi_{\| \, \|}$ provides an upper bound for the absolute value of a generic eigenvalue of $A$ \cite[Thm.\ 7.4]{ergodicity-survey}, extending and improving the results in \eqref{int2}. 
In Section \ref{sec:complex-tau}  we give a short overview of the results proposed in \cite{ergodicity-survey}, then we show in Theorem \ref{gen2}  that the modulus of a generic eigenvalue of a complex matrix $A$ can be represented as a limit involving $\phi_{\|\, \|}$, obtaining that also the property \eqref{int3} carries over to  $\phi_{\|\, \|}$. We conclude the section discussing the relation among $\mu_{\|\, \|}$, $\tau_{\|\, \|}$ and $\phi_{\|\, \|}$,   observing in particular that  properties \eqref{int2} and \eqref{int3} can be easily recovered as a consequence of the proposed results.

In Section \ref{sec:linear-system} we analyze another coefficient of ergodicity which was defined in \cite{rothblum2} for nonnegative matrices $A$ having a positive dominant left eigenvector $y$, and there used to bound the second largest eigenvalue of $A$. This coefficient is an example of ergodicity coefficient belonging to the second class, as indeed it can be related to a deflation of the considered matrix $A$ (c.f. \cite{rothblum2}). Its original definition is
\begin{equation*}
\tau(A,y)=\rho(A) - \sum_{i=1}^n \(\min_{j=1,\dots,n}y_j^{-1}a_{ij}\)y_i\, .
\end{equation*}
 We obtain an alternative formula for $\tau(A,y)$, which suggests a possible generalization that may be useful  to estimate the other eigenvalues of $A$. Moreover we show in Theorem \ref{lin-sis} that when $A$ is column stochastic, $\tau(A,y)$ can be used to prove that the dominant right eigenvector of $A$ (i.e.\ $x$ such that $Ax=x$) can be computed by solving a suitable M-matrix linear system of equations,  whose coefficient matrix is explicitly defined in terms of  the entries of $A$. When applied to the Google matrix $G$ associated with the web graph, Theorem \ref{lin-sis} implies the well known result that the Pagerank centrality problem $p^\t = p^\t G$ is equivalent to a linear system problem.
\subsubsection*{Notation}\label{notations}
We briefly fix here our notation. Given two integers $n$ and $m$, the symbol $\MM_{n,m}(\mathbb F)$  denotes the set of  $n\times m$ matrices over the field $\mathbb F$. When $n=m$ we shorten the notation by writing $\MM_{n}(\mathbb F)$ in place of $\MM_{n,n}(\mathbb{F})$. For our purposes $\mathbb  F$ will be either $\CC$ or $\RR$. For $A \in \MM_n(\mathbb F)$ let $\sigma(A)$ be the set of its eigenvalues and $\rho(A) = \max_{\lambda \in \sigma(A)}|\lambda|$ be its spectral radius. 
The symbol $\uno$ denotes the vector of all ones, $\0$ denote the vector of all zeros and $O$ the zero matrix. \\
A square matrix $A$  is called 
\begin{itemize}
	\item \textit{nonnegative}, if its entries are nonnegative numbers, in symbols $A\geq O$ (analogous notation is used for vectors).
	\item \textit{positive}, if its entries are positive numbers, in symbols $A>O$ (analogous notation is used for vectors).
	\item \textit{reducible}, if there exists a permutation $P$ such that $PAP^{\t}$ is block triangular
	\item \textit{irreducible}, if it is not reducible.
\end{itemize}
\vspace{-8pt}
We shall make freely use of the Perron-Frobenius theory for nonnegative matrices. 

\section{Ergodicity coefficients for complex matrices}\label{sec:complex-tau}
Given any $n\times n$ complex matrix $A$, let $\lambda_1, \lambda_2,\dots, \lambda_s$ be its distinct eigenvalues,  ordered so that 
$$|\lambda_1|\geq |\lambda_2|\geq \cdots \geq |\lambda_s|\, .$$
\begin{definition}
	Given $A \in \MM_n(\CC)$, a vector norm  $\| \, \|$, and any $W \in \MM_{n,t}(\CC)$,  we define
	$$\phi_{\| \, \|}(W, A) = \max_{\multi{x \in \CC^n\\ \|x\|\leq 1 \\ W^*x =\0}} \|x^* A\|\, .$$
\end{definition}

In \cite[\S 7]{ergodicity-survey} Ipsen and Selee give a generalization of some known results \cite{rothblum1,seneta}, concerning the second largest eigenvalue of a nonnegative matrix, essentially extending to complex matrices, and thus to $\phi_{\|\, \|}$, several properties  of the coefficient of ergodicity $\tau_{\| \, \|}$ 
firstly proposed
by Seneta in \cite{seneta} and then deeply investigated by Rothblum and Tan in \cite{rothblum1}. In particular Theorem 3.3 in the latter paper shows  that the modulus of the second largest eigenvalue of a nonnegative irreducible matrix can be expressed in terms of a limit involving the powers of $\tau_{\| \, \|}$. In this section we summarize the properties of $\phi_{\|\, \|}$ which have been proved in \cite[\S 7]{ergodicity-survey}, by collecting them into the following Theorem \ref{teo:ipsen-seele}, and then we complete such theorem by proving for complex matrices the limit property shown in \cite[Thm.\ 3.3]{rothblum1} (see also \eqref{int3}) for real matrices .

We recall that $\phi_{\| \, \|}$ is a continuous scalar function and is probably the most general form of an ergodicity coefficient defined by a vector norm. It is often denoted by $\tau_{\|\, \|}$ (\cite[\S 7]{ergodicity-survey}, \cite[\S 7]{rothblum1} or \cite[\S 3]{rothblum3} for instance) but here we prefer to use a different symbol for the sake of clearness, whereas we use the symbol $\tau_{\| \, \|}$ to denote the ``original'' coefficient introduced in \cite[p.\ 59]{rothblum1}.

\index{$\phi_{\| \, \|}$}
\begin{theorem}\label{teo:ipsen-seele} 
The coefficient $\phi_{\|\, \|}$ is bounded and both well-conditioned and weakly submultiplicative in the second argument. Precisely, let $A,A_1, A_2 \in \MM_n(\CC)$ and let $W \in \MM_{n,t}(\CC)$, then
\begin{enumerate}
\item $0 \leq \phi_{\| \, \|}(W, A) \leq \|A^*\|$.
\item $|\phi_{\| \, \|}(W,A_1)-\phi_{\| \, \|}(W,A_2)| \leq \phi_{\| \, \|}(W,A_1-A_2)$.
\item $\phi_{\| \, \|}(W,A_1A_2)\leq \|A_2^*\|\phi_{\| \, \|}(W,A_1)$.
    \item For any $X \in \MM_{t,n}(\CC)$ it holds that $\phi_{\| \, \|}(W,A)\leq \|A^*-X^*W^*\|$.
\item If $W$ spans the Jordan space associated to the first $\lambda_1, \dots, \lambda_t$ distinct eigenvalues of $A$, then
    $$|\lambda_{t+1}|\leq \phi_{\| \, \|}(W,A)\, .$$
\end{enumerate}
\end{theorem}
As the above theorem is a collection of results taken from \cite{ergodicity-survey}, its proof is omitted. Actually  points \emph{1, 2, 3, 4} are not difficult to prove, whereas \emph 5 is proved in a slightly more general and selfcontained  form in the forthcoming Lemma \ref{gen1}.

In what follows $\K$ is any closed bounded subset of $\CC^n$ with nonempty interior and containing properly the origin (i.e.\ it does not lie on the boundary of $\K$). Also,  $\| \, \|$ is any norm on $\CC^n$. We shall need a couple of important lemmas.


\begin{lemma}\label{moz}
	Let $A \in \MM_n(\CC)$,  $\K\subseteq \CC^n$ be any compact set that contains properly the origin, and $W_t$ the matrix whose columns span the Jordan subspace associated with the eigenvalues $\lambda_1, \dots, \lambda_t$ of $A$.
	Then
	$$|\lambda_{t+1}| \geq \limsup_{k\to \infty}\Bigl(\max_{\multi{x\in \K \\ W_t^* x =\0}}\|x^* A^k\|\Bigr)^{\frac 1 k}\, .$$
\end{lemma}
\begin{proof}
	 Let $J_t$ and $J_t'$ be  the direct sums of the elementary Jordan blocks corresponding to $\lambda_i$, respectively for $1\leq i \leq t$ and $t+1\leq i \leq s$, being $s$ the number of distinct eigenvalues of $A$. Then  the   Jordan
	 canonical form of $A$  can be assumed to be as follows
	 $$A = X J X^{-1}=\sqmatrix{cc}{W_{t} & W_t'}\sqmatrix{cc}{J_t & \\
	 	& J_t'}\sqmatrix{cc}{W_t & W_t'}^{-1}\, .$$
	Set $\Omega = \K \cap \ker W_t^*$. Now, if $x \in \Omega$ then
	$$\textstyle{x^* A^k = x^* X J^k X^{-1} =\sqmatrix{c|c}{\0 & \, x^*W_t' }J^k X^{-1}}\, ,$$
	since by definition $x^* W_t=\0$. It is clear therefore that replacing the Jordan block $(J_t)^k$
	inside $J^k$ with the zero matrix, does not affect the product $x^* A^k$.  Now let $\mu > |\lambda_{t+1}|$ and define the functions
	$$f_k:\CC^n \to \RR_+, \quad f_k(x)= \left\| \frac{x^* A^k}{\mu^k}\right\|\, .$$
	Note that $f_k(x)$ converges pointwise to zero for any $x \in \Omega$. Also note that
	$x_0^{(k)} = \arg \(\sup_{x \in \Omega}f_k(x)\right)$ belongs to  $\Omega$,
	therefore $f_k(x)$ converges uniformly as well. As a consequence $(f_k(x))_k$ is bounded
	in $\Omega$, i.e.\ there exists $M=M_\mu>0$ such that
	$$\forall \mu >|\lambda_{t+1}|, \qquad \sup_{x \in \Omega}\left\| \frac{x^* A^k}{\mu^k}\right\| = \sup_{\Omega}f_k\leq M\, .$$
	Now, taking   the $\limsup$ over $k$, we observe that,  for any $\mu>|\lambda_{t+1}|$, it holds
	\begin{align*}
		\limsup_{k \to \infty} \(\max_{x\in \Omega}\|x^* A^k\|\)^{1/k}&= \limsup_{k \to \infty} \(\max_{x\in \Omega}\left\|\frac{x^* A^k}{\mu^k}\right\|\mu^k\)^{1/k}\\
		&\leq \limsup_{k\to \infty} (M\mu^k)^{1/k}=\mu\, .
	\end{align*}
	The thesis thus follows by continuity allowing $\mu$ to decrease to $|\lambda_{t+1}|$.
\end{proof}

\begin{lemma}\label{gen1} Let $A\in \MM_n(\CC)$, let $\K$ be any compact set in $\CC^n$ containing the origin, and let $W_t$ be the matrix whose columns span the Jordan subspace associated with the  eigenvalues $\lambda_1, \dots, \lambda_t$ of $A$.  Given any $p$ columns $w_1, \dots, w_p$ of $W_t$, $p\geq 1$, set
	$W_{t, p} = \sqmatrix{ccc}{w_1 & \cdots & w_p}$. Then
	$$\(\min_{y \in \partial \K}\|y\|\)\, \cdot \,|\lambda_{t+1}|^k  \leq \max_{\multi{x \in
		\K \\ W_{t,p}^*x=\0} }\|x^* A^k\| \, , \qquad\quad  \forall \, k\geq 1$$
	being $\partial \K$  the boundary of $\K$. 
\end{lemma}
\begin{proof}
  It is easy to see that the left (right) eigenvectors of an eigenvalue $\lambda$ are orthogonal to the right (left) eigenvectors of any other eigenvalue $\mu$, such that $\mu\neq \lambda$. Therefore the set $\partial \K \cap \ker W_{t,p}^*$  contains a left eigenvector $u$ of $A$ relative to $\lambda_{t+1}$, that is there exists $u \in \partial \K \cap \ker W_{t,p}^*$ such that $u^* A = \lambda_{t+1} u^*$.  
  
   The proof is now straightforward since
	$$|\lambda_{t+1}|^k \, \min_{y\in \partial \K}\|y\| = \min_{y \in \partial \K}\|\lambda_{t+1}^k\,  y\|\leq\|\lambda_{t+1}^k \, u\|$$
	and of course $u^* A^k = \lambda_{t+1}^k u^*$.
\end{proof}

We are ready to state the limit property for $\phi_{\|\,\|}$ that we announced.
%
\begin{theorem}\label{gen2}
	Let $A \in \MM_n(\CC)$ and let $W_t$ be the matrix whose columns span the Jordan space  relative to the eigenvalues $\lambda_1, \dots, \lambda_t$ of $A$. Then
	$$|\lambda_{t+1}| = \lim_{k \to \infty}\phi_{\| \, \|} (W_t,A^k)^{1/k}\, .$$
\end{theorem}

\begin{proof}
	The inequality $|\lambda_{t+1}|\leq \liminf_k\phi_{\| \, \|}
	(W_t,A^k)^{1/k}$ follows by Lemma \ref{gen1}  when $\K$ is the unit ball $\K = \{z \in \CC^n : \|z\|\leq 1\}$,  $p$ is the  number of columns in $W_t$, and then taking the infimum over $k$.   Analogously, by Lemma \ref{moz}, the same choice for $\K$ gives $|\lambda_{t+1}|\geq \limsup_k \phi_{\| \, \|}
	(W_t,A^k)^{1/k}$. Combining the two bounds we conclude the proof.
\end{proof}

\subsubsection*{On the relation among  $\mu_{\|\, \|}$, $\tau_{\|\, \|}$ and $\phi_{\|\, \|}$}
 Given an irreducible nonnegative matrix $A$, Rothblum, Seneta and Tan   considered in \cite{rothblum1,seneta} the  following scalar functions
\begin{equation*}
\mu_{\| \, \|}(x,A) = \max_{\multi{\|y\|\leq 1 \\ y^\t x = 0 \\ y \in \CC^n}}\|y^\t A\|, \qquad   \tau_{\| \, \|}(x,A)= \max_{\multi{\|y\|\leq 1 \\ y^\t x = 0 \\ y \in \RR^n }}\|y^\t A\|\, ,
\end{equation*}
where $x$ is a real positive right eigenvector of $A$ corresponding to the eigenvalue $\rho(A)$, i.e.\ $Ax = \rho(A)x$, $x> \0$.  The function $\phi_{\|\,\|}$ is a generalized version of both $\mu_{\|\,\|}$ and $\tau_{\|\,\|}$. In order to support this claim we devote the remaining part of this section to show that several properties proved for $\mu_{\|\,\|}$ and $\tau_{\|\, \|}$ in \cite{rothblum1,seneta} can be derived from Lemma \ref{gen1} and Theorem \ref{gen2}. Although some of the considerations that follow are quite easy, to our knowledge none of them have been stated in explicit form before.

In \cite[pp.\ 579-580]{seneta} Seneta observed that $\mu_{\| \,\|}(x,A)$ is a bound for the second largest modulus of the eigenvalues of $A$. One can easily derive this fact by noting that $\phi_{\| \, \|}(W_{1},A)\leq \mu_{\| \, \|}(x,A)$, for any nonnegative matrix $A$, where $W_1$ spans the Jordan space associated with $\rho(A)$. The equality holds when $A$ is irreducible, as in that case the Perron-Frobenius theorem implies that $W_1=x$.

More precisely, given any $A \geq O$, Lemma \ref{gen1} and Theorem \ref{gen2} imply the following inequalities
\begin{align}
|\lambda_2|&\leq \phi_{\| \, \|}(W_{1},A^k)^{1/k}\leq \mu_{\| \|}( x,A^k)^{1/k}, \quad \forall k \geq 1\, \, ,\label{7}\\
|\lambda_2|&=\lim_{k\to \infty}\phi_{\| \, \|}(W_{1}, A^k)^{1/k}\leq \lim_{k\to \infty}\mu_{\| \, \|}(x,A^k)^{1/k}\, ,\label{2}
\end{align}
being $\lambda_1, \lambda_2, \lambda_3, \dots$ the distinct eigenvalues of $A$. Again, equality holds in \eqref{2} when $A$ is irreducible.

Rothblum and Tan \cite[Thms.\ 3.1, 3.3, 7.1]{rothblum1} have shown that  the same  properties \eqref{7} and \eqref{2} hold for $\tau_{\|\,\|}$. Inspiring our argument to the approach they used, here we  draw their result from Theorem \ref{gen2} and the following further Lemma \ref{gen1.1}.

Given any vector $x \in \CC^n$, decompose it as the sum $x = \Re x + \mathbf i\,  \Im x$,  where $\Re x $ and $\Im x$ are the unique real vectors defined in the obvious way by the real and imaginary parts of the entries of $x$. Consider the following norm over $\CC^n$
$$\nu(x)= \sup_{\alpha \in \RR}\|\Re x \cos \alpha + \Im x \sin \alpha\|\, ,$$
Note that $\nu$ is a norm if and only if $\| \, \|$ is a norm.
\begin{lemma}\label{gen1.1}
	Let $A\in \MM_n(\RR)$ and let $W\in \MM_{n,p}(\RR)$. Then 
	 $$\max_{\multi{x \in \CC^n \\ \nu(x) \leq 1\\ W^\t x =\0}}\nu(x^* A)=\max_{\multi{x \in \RR^n \\ \|x\| \leq 1\\ W^\t x =\0} }\|x^\t A\|\, .$$
\end{lemma}

\begin{proof}
	Since $W$ is real, if  $W^\t x = \0$ then both  $\Re x$ and $\Im x$ belong to $\ker W^\t$ and $(\Re x \cos \alpha + \Im x \sin \alpha) \in \ker W^\t$ as well. As a consequence, for any $x \in \ker W^\t$ such that $\nu(x) \leq 1$, we have
	$$ \nu(x^* A)=\sup_{\alpha \in \RR}{\|(\Re x \cos \alpha + \Im x \sin \alpha)^\t A\|}\leq \max_{\multi{z \in \RR^n \\ \|z\| \leq 1\\ W^\t z =\0}}\|z^\t A\|\, ,$$
	thus by taking the maximum over $x \in \CC^n$ such that $ W^\t x = \0$ and $\nu(x) \leq 1$ we get 
	$$\max_{\multi{x \in \CC^n \\ \nu(x) \leq 1\\ W^\t x =\0}}\nu(x^* A)\leq \max_{\multi{x \in \RR^n \\ \|x\| \leq 1\\ W^\t x =\0} }\|x^\t A\|\, .$$
	As the reverse inequality is straightforward, the proof is complete.
\end{proof}
It is now not difficult to obtain the analogous of \eqref{7} and \eqref{2} for $\tau_{\|\, \|}$. Assume that $A$ is real and that the matrix $W_t$, spanning the Jordan space relative to the first $t$ distinct eigenvalues of $A$ has $p$ real columns. By using Lemma \ref{gen1}, with $\K=\{x \in \CC^n : \nu(x)\leq 1\}$,  and Lemma \ref{gen1.1}, we get 
	\begin{equation}\label{88}
|\lambda_{t+1}|\leq \phi_{\nu}(W_{t,p},A)= \max_{\multi{x \in \RR^n \\ \|x\| \leq 1\\ W^\t_{t,p} x =\0} }\|x^\t A\|\, ,
	\end{equation}
	where $W_{t,p}$ is the matrix made by the $p$ real columns of $W_t$ and, as before, $\lambda_1, \lambda_2, \lambda_3, \dots$ are the distinct eigenvalues of $A$. 
	In particular, if $A \geq O$ and $x$ is a real nonnegative right eigenvector associated to $\rho(A)$, then \eqref{88} and Theorem \ref{gen2} imply
	\begin{align}
	|\lambda_2|&\leq \phi_{\nu}(W_{1},A^k)^{1/k}\leq \tau_{\| \|}( x,A^k)^{1/k}, \quad \forall k \geq 1\, \, ,\notag\\
	|\lambda_2|&=\lim_{k\to \infty}\phi_{\nu}(W_{1}, A^k)^{1/k}\leq \lim_{k\to \infty}\tau_{\| \, \|}(x,A^k)^{1/k}\, ,\label{22}
	\end{align}	
	where $W_1$ spans the Jordan space of $A$ relative to the eigenvalue $\rho(A)$. As for $\mu_{\|\,\|}$, if $A\geq O$ is irreducible, then $W_1 = x$ and the equality holds in \eqref{22}. 
\section{Nonnegative matrices with a positive dominant eigenvector}\label{sec:linear-system}
In this section we slightly change the notation: given a $n\times n$ matrix $A$, we let $\lambda_i$, $1\leq i \leq n$ denote its eigenvalues, this time counting multiplicities. As before we assume them ordered  according to decreasing magnitude
$$|\lambda_1|\geq |\lambda_2|\geq \cdots \geq |\lambda_n|\, .$$

Consider now a $n\times n$ nonnegative matrix\footnote{Recall that  $\rho(A)=\lambda_1\geq |\lambda_2|\geq \dots\geq |\lambda_n|$ for any $n\times n$ nonnegative matrix  $A$.} $A\neq O$ and let $y\geq \0$ be a left dominant eigenvector, i.e.\ $y^\t A = \rho(A)y^\t$.  When $y>\0$ we may consider another functional $\tau(A,y)$  given by 
\begin{equation}\label{t}
\tau(A,y)=\rho(A) - \sum_{i=1}^n \(\min_{j=1,\dots,n}y_j^{-1}a_{ij}\)y_i\, .
\end{equation}
Haviv, Ritov and Rothblum introduced that functional in \cite{rothblum2} and showed therein that $\tau(A,y)$  has properties somehow analogous to those of $\tau_{\|\,\|}$. In fact (see \cite[Thm.\ 1]{rothblum2})
\begin{equation}\label{s}
|\lambda_2|\leq \tau(A,y) \quad \text{and} \quad |\lambda_2|=\lim_{k\to \infty}\tau(A^k,y)^{1/k}\, .
\end{equation}

Now let $\me S^+$ be the set of nonnegative column stochastic  matrices, that is $S \in \me S^+$ if and only if $S\geq O$ and $\uno^\t S=\uno^\t$, and let $\A^+$ be the set of nonnegative and nonnull matrices having a positive dominant left eigenvector. Note that any matrix $A \in \A^+$ is such that $\rho(A)>0$.  It is known that any $A \in \A^+$ is similar, via a positive definite diagonal similarity, to a  nonnegative matrix whose columns sum up to $\rho(A)$ (see for instance\ \cite{stochastic-similar-doubly,tudisco-power-nonnegative}). Thus
$$\A^+=\{\mu D\me S^+ D^{-1} :  \mu>0, \, \, D \text{ diagonal,}\, \, d_{ii}>0 \, \, \forall i \}\, .$$
As a consequence the spectrum of any  $A \in \A^+$ is the spectrum of an element of $\me S^+$ scaled by a positive factor. It follows that, if $y>\0$ is a left dominant eigenvector of $A \in \A^+$, and $S\in \me S^+$ is such that $DSD^{-1}=\rho(A)^{-1}A$ (note that such an $S$ may be not uniquely defined since  the geometric multiplicity of $\rho(A)$ can be larger than $1$  in general), then
$$\tau(A,y)=\rho(A)\tau(DSD^{-1},y)=\rho(A)\tau_{n-1}(S)\, ,$$
where $\tau_{n-1}(S)=\tau(S,\uno)=1-\sum_i \min_j s_{ij}$.

So we reduce our attention to $\me S^+$ and to \index{$\tau_{n-1}$} $\tau_{n-1}:\me S^+ \to \RR$ rather than on \eqref{t}. Such $\tau_{n-1}$ is sometimes denoted by $\beta$ (see for instance  \cite[p.\ 157]{ergodicity-survey} or \cite[p.\ 137]{seneta-markov-chains}) and is one of the  earlier \textit{proper} ergodicity coefficients. A proper ergodicity coefficient was originally defined as a continuous scalar function $f$ from $\me S^+$ to the real interval $[0,1]$ and such that $f(M)=0$ if and only if $M$ is a rank-one matrix in $\me S^+$ \cite[p.\ 509]{seneta-historical}. For the sake of completeness, and in order to justify our choice of the symbol $\tau_{n-1}$, we state the following proposition, which nonetheless will be  used to prove Theorem~\ref{lin-sis}.
\begin{proposition}\label{tau-obs}Let $A$ be a matrix of $\me S^+$. Then
	\begin{itemize}
		\item  $\tau_{n-1}(A) = \max_{j=1,\dots,n}\sum_{i\in V}a_{ij}-\sum_{i\in V}\min_{k=1,\dots,n}a_{ik}$, for any subset $V\subset \{1,\dots,n\}$ such that $|V|=n-1$.
		\item $\tau_{n-1}:\me S^+ \to [0,1]$. In particular $\tau_{n-1}(A)=0$ if and only if $A=x \uno^\t$, for some vector  $x\geq \0$, $x^\t \uno =1$.  Whereas $\tau_{n-1}(A)=1$ if and only if each row of $A$ has at least one zero entry (or, in other words, $\tau_{n-1}(A)<1$ if and only if $\max_i \min_j a_{ij}>0$).
	\end{itemize}
\end{proposition}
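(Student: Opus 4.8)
The plan is to prove the two items separately, both by elementary manipulations starting from the definition $\tau_{n-1}(A)=1-\sum_i\min_j a_{ij}$ and using the column-stochasticity $\sum_i a_{ij}=1$ for every $j$.

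For the first item I would fix a subset $V\subset\{1,\dots,n\}$ with $|V|=n-1$, say $V=\{1,\dots,n\}\setminus\{\ell\}$. For each column index $j$ we have $\sum_{i\in V}a_{ij}=1-a_{\ell j}$, so $\max_j\sum_{i\in V}a_{ij}=1-\min_j a_{\ell j}=1-\min_j a_{\ell j}$. Hence the claimed right-hand side equals $1-\min_j a_{\ell j}-\sum_{i\in V}\min_k a_{ik}=1-\sum_{i=1}^n\min_k a_{ik}=\tau_{n-1}(A)$, where in the middle step I reinserted the index $\ell$ into the sum of row-minima. This is just bookkeeping; there is no real obstacle here, only the need to state it cleanly for an arbitrary $V$.

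For the second item, nonnegativity of each $\min_j a_{ij}$ and $\sum_i\sum_j\text{(something)}$ bounds give $\tau_{n-1}(A)\le 1$; for the lower bound $\tau_{n-1}(A)\ge 0$ I would use that for each fixed $j$, $\sum_i\min_k a_{ik}\le\sum_i a_{ij}=1$, so $1-\sum_i\min_k a_{ik}\ge 0$. For the equality cases: $\tau_{n-1}(A)=1$ iff $\sum_i\min_k a_{ik}=0$ iff every summand $\min_k a_{ik}$ vanishes (each is nonnegative) iff every row of $A$ has a zero entry, which is the stated reformulation $\max_i\min_j a_{ij}>0 \iff \tau_{n-1}(A)<1$. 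For $\tau_{n-1}(A)=0$: this forces $\sum_i\min_k a_{ik}=1$; combined with $\min_k a_{ik}\le a_{ij}$ for every $j$ and $\sum_i a_{ij}=1$, summing over $i$ gives $1=\sum_i\min_k a_{ik}\le\sum_i a_{ij}=1$, so equality holds throughout, forcing $\min_k a_{ik}=a_{ij}$ for all $i,j$; thus every row of $A$ is constant, and column-stochasticity pins the constant to $1/n$, i.e. $A=\frac1n ee^\t$. Conversely $\frac1n ee^\t$ clearly has $\tau_{n-1}=0$.

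I do not expect a genuine obstacle in either item; the only point requiring slight care is the $\tau_{n-1}(A)=0$ direction, where one must argue that a nonnegative column-stochastic matrix all of whose row-minima sum to $1$ must have constant rows — this is the squeezing argument above, and it is the crux of showing that $\frac1n ee^\t$ is the \emph{unique} rank-one element of $\S^+$ annihilated by $\tau_{n-1}$.
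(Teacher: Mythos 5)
Your proposal is correct and follows essentially the same route as the paper: the first item by writing $\sum_{i\in V}a_{ij}=1-a_{\ell j}$ and recombining, and the second by the same squeezing argument $1=\sum_i a_{ij}\geq\sum_i\min_k a_{ik}$ with equality forcing constant rows. The only difference is that you spell out the details more explicitly than the paper does.
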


\begin{proof}
	Let $A\in \me S^+$ and let $V=\{1,\dots,n\}\setminus \{h\}$, then $\max_j \sum_{i \in V}a_{ij}= 1-\min_j a_{hj}$. This proves the first statement. A direct inspection easily shows $0\leq \tau_{n-1}(A)\leq 1$ and $\tau_{n-1}(A)=1 \iff \max_i \min_j a_{ij}=0$. Finally, if $\tau_{n-1}(A)=0$ then we see from its definition that $1 = \sum_i a_{ij}=\sum_i \min_h a_{ih}$ for all $j=1,\dots,n$. This implies $a_{ij} = \min_{h}a_{ih}$ $\forall j$, therefore the rows of $A$ must be entrywise constant, i.e.\ $A=x \uno^\t$, for some $x\geq \0$, $x^\t \uno =1$. The reverse implication is straightforward.
\end{proof}

Therefore we can rewrite $\tau_{n-1}(A)$ as the maximum of a certain function of the entries of $A$ over the subsets $V\subset \{1,\dots,n\}$ of cardinality $n-1$. This fact immediately suggests a generalization to $\tau_m$, given by
\begin{equation}\label{taum}
\tau_{m}(A)=\max_{\multi{V\subset\{1,\dots,n\}\\ |V|=m}}\max_{j \in \{1,\dots,n\} }\sum_{i\in V}\Bigl(a_{ij}-\min_{k \in \{1,\dots, n\}} a_{ik}\Bigr)\, ,
\end{equation}
where $m=1,\dots, n-1$. Observe that
\begin{proposition}\label{tau-pro}For any $A \in \me S^+$, if $k\leq m$ then $\tau_k(A)\leq \tau_m(A)$.
\end{proposition}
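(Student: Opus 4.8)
The plan is to show that the quantity maximized in the definition of $\tau_m$ is monotone in $m$ simply because enlarging the index set $V$ can only add nonnegative terms to the inner sum. Concretely, for any fixed column index $j$ and any subset $V$, define $c_i(j) = a_{ij} - \min_{k} a_{ik}$; since $a_{ij} \geq \min_k a_{ik}$ for every $i$ and $j$, each $c_i(j)$ is nonnegative. Thus for $k \leq m$ and any $V'$ with $|V'| = k$, one can pick any $V$ with $V' \subseteq V$ and $|V| = m$ (possible because $k \leq m \leq n$), and then $\sum_{i \in V'} c_i(j) \leq \sum_{i \in V} c_i(j)$.

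The key steps, in order, are: (i) observe $c_i(j) \geq 0$ for all $i,j$; (ii) fix an optimal pair $(V', j')$ achieving $\tau_k(A)$, i.e. $\tau_k(A) = \sum_{i \in V'} c_i(j')$; (iii) extend $V'$ to some $V \supseteq V'$ of cardinality $m$ and use nonnegativity of the $c_i(j')$ to get $\tau_k(A) = \sum_{i \in V'} c_i(j') \leq \sum_{i \in V} c_i(j')$; (iv) note the right-hand side is one of the quantities over which the maximum defining $\tau_m(A)$ is taken (same column $j'$, admissible set $V$), hence it is $\leq \tau_m(A)$. Chaining these gives $\tau_k(A) \leq \tau_m(A)$.

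There is essentially no obstacle here: the statement is a direct consequence of the nonnegativity of the entrywise differences $a_{ij} - \min_k a_{ik}$ together with the fact that a maximum over subsets of a fixed size $m$ dominates the value on any particular size-$k$ subset once it is padded out to size $m$. The only point requiring a word of care is that the padding $V' \subseteq V$ with $|V| = m$ is feasible, which holds because $m \leq n$ (all subsets considered live inside $\{1,\dots,n\}$); this is implicit in the definition \eqref{taum}, so no hypothesis beyond $k \leq m$ is needed.
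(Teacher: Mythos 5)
Your proof is correct and follows essentially the same route as the paper's: fix a subset attaining $\tau_k(A)$, pad it out to a set of cardinality $m$, and use the nonnegativity of $a_{ij}-\min_k a_{ik}$ to conclude. The only cosmetic difference is that you fix an optimal column $j'$ explicitly rather than carrying the $\max_j$ through the chain of inequalities.
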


\begin{proof}
	Let $V_k\subset \{1,\dots,n\}$, $|V_k|=k$, be the subset realizing $\tau_k(A)$. For any $V_m \supseteq V_k$ we have
	\begin{align*}
	\tau_k(A)&= \ts{\max_j} \sum_{i\in V_k}(a_{ij}-\ts{\min_h} a_{ih})\\
	&\leq  \ts{\max_j} \left\{\sum_{i \in V_k}(a_{ij}-\ts{\min_h} a_{ih})+\sum_{i \in V_m\setminus V_k}(a_{ij}-\ts{\min_h} a_{ih})\right\}\\
	&= \ts{\max_j }\sum_{i \in V_m}(a_{ij}-\ts{\min_h} a_{ih})\\
	&\leq \tau_m(A)
	\end{align*}
	since $a_{ij}\geq \min_h a_{ih}$ for any $i$ and $j$.
\end{proof}

Let $1 = \lambda_1 \geq |\lambda_2|\geq \cdots \geq |\lambda_n|$ be the eigenvalues of $A \in \me S^+$. 
Let us point out  that  $\tau_1$, as $\tau_{n-1}$, is a proper coefficient of ergodicity. Indeed it is not difficult to  observe that  $\tau_1(A)=0$ if and only if $A$ has all constant rows, that is $\tau_{1}(A)=0$ if and only if $A$ is a rank one matrix $A=x\uno^\t$ where $x\geq \0$ and $x^\t \uno =1$. We obtain in turn that $\tau_1(A)=0$ implies $\lambda_n=0$, therefore the inequality $|\lambda_n|\leq \tau_1(A)$ holds when either $A$ is singular (as in that case $|\lambda_n|=0$)  or when $\tau_1(A)$ is zero. We have moreover
\begin{proposition}\label{prosym}
	If $A \in \me S^+$ is symmetric, then $|\lambda_n|\leq \tau_1(A)$.
\end{proposition}
\begin{proof}
	As $A$ is real symmetric, the quantity 
	$\lambda = \min_{x \neq 0}\frac{x^\t A x}{x^\t x} $ belongs to $\sigma(A)$. Let $1\leq h,k\leq n$ be distinct indices such that $a_{hh}\geq a_{kk}\geq 0$, and let $e_i$ denote the $i$-th canonical vector. Then
	\begin{align*}
		|(e_h-e_k)^\t A (e_h-e_k)|&=|a_{hh}+a_{kk}-2\, a_{hk}|\leq 2\,  |a_{hh}-a_{hk}|\\
		&\leq 2 \, (\ts{\max_{j}}a_{hj}-\ts{\min_{r}}a_{hr})\\
		&\leq 2\,\ts{\max_{i}}(\ts{\max_{j}}a_{ij}-\ts{\min_{r}}a_{ir})=2\, \tau_1(A)\, .
	\end{align*}
	Therefore, as $(e_h-e_k)^\t (e_h-e_k)=2$ for any $h\neq k$, we have 
	\begin{equation*}
	|\lambda_n|\leq |\lambda|=\left|\min_{x \neq 0}\frac{x^\t A x}{x^\t x} \right|\leq \left|\frac{(e_h-e_k)^\t A (e_h-e_k)}{2} \right|\leq \tau_1(A)\, .\qedhere
	\end{equation*} 
\end{proof}

To our opinion it is reasonable to guess at this stage that the inequality 
\begin{equation}\label{eq:guess}
	|\lambda_{k}|\leq \tau_{n-k+1}(A)
\end{equation} holds for several values of $k\in \{2,\dots, n\}$. There are indeed a number of evidences in support of such a guess: Proposition \ref{tau-pro} shows that $\tau_{n-k+1}(A)$, as $|\lambda_k|$,  is monotonically non-increasing with respect $k$, moreover the suggested inequality \eqref{eq:guess} holds for  $k=2$ and any $A \in \me S^+$ (see \eqref{s} and the discussion after Lemma \ref{alpha}). Finally, Proposition \ref{prosym} and the considerations immediately before show that,   when $A$ is either singular or symmetric, or $\tau_1(A)=0$, then \eqref{eq:guess} holds also for $k=n$. This leaves us with the nontrivial open question to determine those matrices $A\in \me S^+$ and those indices $3 \leq k \leq n-1$ for which \eqref{eq:guess} is satisfied. 
We firmly think that further investigations on this direction would be of significant interest. 

\begin{remark}
Given $m \in \{1, \dots, n-1\}$, consider the following further coefficient 
\begin{equation*}
\tilde{\tau}_{m}(A)=\min_{\multi{V\subset\{1,\dots,n\}\\ |V|=m}}\max_{j \in \{1,\dots,n\} }\sum_{i\in V}\Bigl(a_{ij}-\min_{k \in \{1,\dots, n\}} a_{ik}\Bigr), 
\end{equation*}
obtained by replacing the maximum over $V$ with the minimum over $V$  in the definition \eqref{taum} of $\tau_m$. 
It is straightforward to observe that $\tilde \tau_{m}(A)\leq \tau_m(A)$, for all $m=1, \dots, n-1$, moreover $\tilde{\tau}_{n-1}(A)=\tau_{n-1}(A)\geq |\lambda_2|$, due to Proposition \ref{tau-obs}, and $\tilde \tau_1(A)=0$ implies $|\lambda_n|=0$, as any column stochastic matrix with a constant row must be singular. Therefore one could in principle use $\tilde{\tau}_m$ to generalize the ergodicity coefficient $\tau_{n-1}$, rather than $\tau_m$, as we propose. Note furthermore that $\tilde{\tau}_m$, as $\tau_m$, is monotonically non-decreasing with respect to the index $m$, that is the statement of Proposition \ref{tau-pro} holds unchanged if $\tau_m$ is replaced by $\tilde{\tau}_m$ (the proof can be easily obtained by an argument analogous to the one used in Proposition \ref{tau-pro}, but starting with the subset $V_m$ realizing $\tau_m(A)$ and then considering any $V_k\subseteq V_m$). 
However, the magnitude of the eigenvalues $\lambda_3,\lambda_4,\dots$ may happen to be larger than the numbers $\tilde{\tau}_{n-2}(A), \tilde{\tau}_{n-3}(A), \dots$, respectively. The following $3\times 3$ matrix provides an example: 
$$A = \sqmatrix{ccc}{
	0 & 0.29 & 0.55\\
	0.63 & 0.4 & 0.12\\
	0.37 & 0.31 & 0.33} \quad \qquad  \begin{tabular}{ccc}
$|\lambda_2| = 0.3341$ & $\tau_2(A)=0.57$ & $\tilde{\tau_2}(A)=0.57$ \\
$|\lambda_3| = 0.0641$ & $\tau_2(A)=0.55$ & $\tilde{\tau_1}(A)=0.06$\\
\end{tabular}$$
Note instead that for such $3\times 3$ matrix the inequality \eqref{eq:guess} holds for both $k=2$ and $k=3$. We want to point out that we tested \eqref{eq:guess} on many random  matrices, generated according to several different distributions (e.g. uniform, normal, preferential attachment, small world), but  we was not able to find any matrix $A\in \me S^+$ and any integer $2\leq k \leq n$ such that \eqref{eq:guess} does not hold.
%
\end{remark}


\subsection{The computation of the dominant eigevector}
Given $A \in \me S^+$ let  $x$ be a right nonnegative dominant eigenvector of $A$, such that $\uno^\t x = 1$. In this subsection we show that, under mild conditions and using $\tau_{n-1}$, $x$ can be characterized as the solution of a linear system of the form
$$Mx=y\, ,$$
where $M$ is an M-matrix, and the entries of both $M$ and $y$ are explicitly defined in terms of the elements of $A$.

\begin{lemma} \label{alpha}
	Let $A \in \me S^+$ and $\alpha \in \RR$. Given $h\in \{1,\dots,n\}$ consider the vector $y=y(\alpha)$ such that $y_i=\min_j a_{ij}$, $i\neq h$, $y_h = 1-(\sum_{i\neq h}y_i) -\alpha$, and note that $\uno^\t y = 1-\alpha$. Then
	\begin{enumerate}
		\item The columns of $\frac 1 \alpha (A-y  \uno^\t)$ sum up to one, for any $\alpha\neq 0$,
		\item $C=A-y \uno^\t$ is nonnegative, for any $\alpha \geq \tau_{n-1}(A)$.
	\end{enumerate}
\end{lemma}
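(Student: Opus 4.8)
The plan is to verify the two assertions by direct computation, exploiting only the column-stochasticity of $A$ and the explicit definition of $\ell$.

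First I would dispatch item 1. Since $A \in \S^+$ we have $e^\t A = e^\t$, and by construction $e^\t \ell = 1-\alpha$; hence
$$e^\t\bigl(A - \ell e^\t\bigr) = e^\t A - (e^\t \ell)\, e^\t = e^\t - (1-\alpha)e^\t = \alpha\, e^\t .$$
Dividing by $\alpha \neq 0$ gives $e^\t\bigl(\f{1}{\alpha}(A - \ell e^\t)\bigr) = e^\t$, i.e. the columns of $\f{1}{\alpha}(A-\ell e^\t)$ sum to one.

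For item 2 I would examine $C = A - \ell e^\t$ one row at a time, noting that its $(i,j)$ entry is $a_{ij} - \ell_i$. For $i \neq h$ we have $\ell_i = \min_k a_{ik}$, so $c_{ij} = a_{ij} - \min_k a_{ik} \geq 0$ for every $j$, independently of $\alpha$; thus the only row that can fail to be nonnegative is the $h$-th one. Substituting the definition of $\ell_h$,
$$c_{hj} = a_{hj} - \ell_h = a_{hj} - 1 + \sum_{i \neq h}\min_k a_{ik} + \alpha ,$$
so $c_{hj} \geq 0$ for all $j$ if and only if $\alpha \geq 1 - a_{hj} - \sum_{i\neq h}\min_k a_{ik}$ for all $j$. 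Taking the maximum of the right-hand side over $j$ — attained where $a_{hj}$ is smallest — this condition reads exactly $\alpha \geq 1 - \sum_{i}\min_k a_{ik} = \tau_{n-1}(A)$, recalling that $\tau_{n-1}(A) = 1 - \sum_i \min_j a_{ij}$. Hence $C \geq O$ whenever $\alpha \geq \tau_{n-1}(A)$.

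There is no genuine obstacle here: both parts are elementary manipulations. The only point deserving a line of care is recognizing that the threshold imposed by the $h$-th row is \emph{precisely} $\tau_{n-1}(A)$ and not merely some quantity bounded by it; this sharpness is what makes the value of the index $h$ immaterial, and (though the lemma only claims sufficiency) it also shows that for $\alpha < \tau_{n-1}(A)$ the $h$-th row of $C$ necessarily has a negative entry.
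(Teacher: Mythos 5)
Your proof is correct and follows essentially the same direct entrywise computation as the paper: item 1 via $e^\t A = e^\t$ and $e^\t\ell = 1-\alpha$, and item 2 by checking rows $i\neq h$ trivially and row $h$ against the definition $\tau_{n-1}(A)=1-\sum_i\min_j a_{ij}$ (the paper instead rewrites $a_{hj}=1-\sum_{i\neq h}a_{ij}$ and invokes the first item of Proposition \ref{tau-obs}, but this is the same estimate). Your added remark that the threshold $\alpha\geq\tau_{n-1}(A)$ is sharp is a correct bonus not claimed in the lemma.
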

\begin{proof}
	(1) By definition we have $\frac 1 \alpha (A-y \uno^\t)^\t \uno =\frac 1 \alpha (\uno-(1-\alpha)\uno)=\uno$. (2) Simply observe that if $i \neq h$, then  $c_{ij}=a_{ij}-\min_{k}a_{ik}\geq 0$ and
	$$c_{hj}=a_{hj}-y_h = 1-\sum_{i\neq h}a_{ij}-(1-\alpha -\sum_{i\neq h}y_i)\geq \alpha-\tau_{n-1}(A)\geq 0$$
	for $j=1,\dots,n$.
\end{proof}

Observe that due to the previous lemma, for any nonnegative column stochastic $A$ and any $\alpha \geq \tau_{n-1}(A)$, we have the decomposition $A = \alpha B +y \uno^\t$, where $B$ is still nonnegative column stochastic. As a direct consequence of this fact we obtain an alternative proof of the well known upper bound in \eqref{s}, namely:  $|\lambda|\leq \tau_{n-1}(A)$, for any $\lambda \in \sigma(A)$ such that $\lambda \neq 1$. In fact, given $\uno$, let $\{\uno,y_2,\dots,y_n\}$ be a set of linearly independent vectors and let $Y$ be the nonsingular matrix whose rows are such vectors. Then
$$YAY^{-1}=\alpha YBY^{-1}+Yy \uno^\t Y^{-1}=\alpha \sqmatrix{cc}{1 & \0^\t \\ w & Q}+ \sqmatrix{cc}{1-\alpha & \0^\t \\ z & O}\, ,$$
which implies $\sigma(A)= \{1\}\cup \sigma(\alpha Q)$. Note that for all $\lambda \in \sigma(\alpha Q)$ we have $|\lambda|\leq \alpha$, since the eigenvalues of $Q$ are eigenvalues of $B$. The claimed bound now follows by taking the infimum over $\alpha$.  

Recall that any irreducible nonnegative column stochastic matrix $A$ with $\sigma(A)\setminus \{1\}\subseteq \{z \in \CC : |z|<1\}$ is such that $A^k>O$ for some $k\geq 1$. Then observe that,  combining this observation with  Proposition \ref{tau-obs}, one gets the following interesting consequence, whose simple proof is omitted for brevity
\begin{corollary}
	Let $A$ be a nonnegative irreducible column stochastic  matrix such that $\textstyle{\max_i \min_j a_{ij}>0}$. Then there exists $k\geq 1$ such that $A^k >O$.
\end{corollary}
The main theorem of this section now follows
\begin{theorem}\label{lin-sis} 
	Given  $A \in \me S^+$ let $y$ be the vector $y_i=\min_j a_{ij}$ and $B\in \me S^+$ the matrix $\tau_{n-1}(A)B =A-y \uno^\t$.  If $y$ is not the zero vector and $x$ is a right nonnegative dominant eigenvector of $A$ s.t.\ $\uno^\t x=1$, then $I-\tau_{n-1}(A)B$ is nonsingular and
$$x = (I-\tau_{n-1}(A)B)^{-1}y\, \, .$$
Viceversa if $I-\tau_{n-1}(A)B$ is invertible then the vector $y_i = \min_j a_{ij}$ is nonzero and $x = (I-\tau_{n-1}(A)B)^{-1}y$ is a right nonnegative dominant eigenvector of $A$ such that $\uno^\t x =1$.
\end{theorem}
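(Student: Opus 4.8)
The plan is to exploit the decomposition $A = \tau_{n-1}(A)B + \ell e^\t$ with $B \in \S^+$, which is exactly the content of Lemma \ref{alpha} applied with $h$ achieving $\max_i \min_j a_{ij} = \min_j a_{hj}$ and $\alpha = \tau_{n-1}(A)$; note that in this case $\ell_h = 1 - \sum_{i\neq h}\ell_i - \tau_{n-1}(A) = \min_j a_{hj}$, so indeed $\ell_i = \min_j a_{ij}$ for all $i$, consistently with the statement. Throughout I write $\tau = \tau_{n-1}(A)$ for brevity.

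For the first implication, suppose $\max_i \min_j a_{ij} > 0$, so that $\tau < 1$ by Proposition \ref{tau-obs}. Since $B \geq O$ is column stochastic, $\rho(B) = 1$ (its left eigenvector is $e$), hence $\rho(\tau B) = \tau < 1$ and $I - \tau B$ is invertible with $(I-\tau B)^{-1} = \sum_{k\geq 0}(\tau B)^k \geq O$. Now if $Ax = x$, $x \geq 0$, $e^\t x = 1$, substitute the decomposition: $x = Ax = \tau Bx + \ell e^\t x = \tau Bx + \ell$, so $(I - \tau B)x = \ell$ and therefore $x = (I-\tau B)^{-1}\ell$.

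For the converse, assume $I - \tau B$ is invertible. First I must rule out $\tau = 1$: if $\tau = 1$ then (again by Proposition \ref{tau-obs}) every row of $A$ has a zero entry, so $\ell = 0$; but $B = A - \ell e^\t = A$ is column stochastic, so $1 \in \sigma(B) = \sigma(\tau B)$, contradicting invertibility of $I - \tau B$. Hence $\tau < 1$, equivalently $\max_i \min_j a_{ij} > 0$, and as above $(I-\tau B)^{-1}$ exists and is nonnegative. Define $x = (I-\tau B)^{-1}\ell$; then $x \geq 0$ since $\ell \geq 0$. Reversing the algebra, $(I-\tau B)x = \ell$ gives $\tau Bx + \ell = x$, and since $e^\t \ell = 1 - \tau$ and $e^\t B = e^\t$ we get $e^\t x = \tau e^\t x + (1-\tau)$, i.e. $(1-\tau)e^\t x = (1-\tau)$, so $e^\t x = 1$. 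Finally $Ax = \tau Bx + \ell e^\t x = \tau Bx + \ell = x$, so $x$ is a right dominant eigenvector of $A$ normalized by $e^\t x = 1$.

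The only slightly delicate point is the converse direction's need to exclude $\tau = 1$ before one can even speak of $(I-\tau B)^{-1}\ell$ as a candidate eigenvector, together with verifying the normalization $e^\t x = 1$ — both, however, follow cleanly from column stochasticity of $B$ and Proposition \ref{tau-obs}; everything else is the two-line substitution using the decomposition from Lemma \ref{alpha}, so no real obstacle is expected.
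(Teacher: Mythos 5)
Your proposal is correct and follows essentially the same route as the paper: the decomposition $A=\tau_{n-1}(A)B+\ell e^{\t}$ from Lemma \ref{alpha}, the Neumann series $(I-\tau_{n-1}(A)B)^{-1}=\sum_{k\geq 0}\tau_{n-1}(A)^kB^k$ for nonnegativity, and the identity $e^{\t}\ell=1-\tau_{n-1}(A)$ for the normalization. Your explicit argument ruling out $\tau_{n-1}(A)=1$ in the converse (via $\ell=0$ and $1\in\sigma(B)$) just fills in a step the paper states without detail.
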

\begin{proof}
	If $y \neq \0$ then $\max_i \min_j a_{ij}>0$ implying that $\tau_{n-1}(A)<1$ and, thus, $I-\tau_{n-1}(A)B$ is invertible. Moreover, by computing the product $(I-\tau_{n-1}(A)B)x$, and using the equalities $Ax=x$ and $\uno^\t x =1$, we get $(I-\tau_{n-1}(A)B) x=y$. Viceversa, if  $I-\tau_{n-1}(A)B$ is invertible, then $\tau_{n-1}(A)\neq 1$ (i.e.\ $\tau_{n-1}(A)<1$), thus by Proposition \ref{tau-obs} $\max_i\min_ja_{ij}>0$, that is $y$ is nonzero and   $x=\sum_{k\geq 0}\tau_{n-1}(A)^kB^k y \geq \0$. Moreover,  since $\uno^\t y = 1-\tau_{n-1}(A)$,
	$$\uno^\t x = \uno^\t \sum_{k\geq 0}\tau_{n-1}(A)^k B^ky=\sum_{k\geq 0}\left\{\tau_{n-1}(A)^k-\tau_{n-1}(A)^{k+1}\right\}=1\, .$$
	Now expand the equation  $(I-\tau_{n-1}(A)B)x=y$ to observe that $x$ is a nonnegative fixed point of $A$. 
\end{proof}	

Theorem \ref{lin-sis} is interesting from a computational point of view. Indeed it shows that the eigenvector problem $Ax=x$ can be interchanged with the linear system problem $(I-\tau_{n-1}(A)B)x=y$, when dealing with nonnegative  stochastic matrices $A$. In other words, even though the problem of computing the eigenvector $Ax=x$ is in general substantially different with respect the problem of solving a linear system, Theorem \ref{lin-sis} shows that when $\tau_{n-1}(A)\neq 1$, or equivalently $\max_i \min_j a_{ij}>0$, or equivalently $y$ is not the zero vector, then actually we can compute the eigenvector $x$ as the solution of the system of linear equations  $(I-\tau_{n-1}(A)B)x=y$, allowing the use of possibly any linear system solver to approximate $x$. 

Theorem \ref{lin-sis} has  possibly several applications. To our opinion a  relevant example is the Google's Pagerank index problem where  the particular structure of the model allows to recast the stationary distribution problem in terms of a linear system (\cite{pagerank-ipsen-selee,google} e.g.). 
The Google engine web matrix (or Pagerank transition matrix), here denoted by $G=(g_{ij})$, is a convex combination of a row stochastic matrix $T$ (the transition matrix of the graph) and a rank-one row stochastic matrix: $G=c T  +(1-c)\uno v^\t$, where $v$ is a positive  vector whose elements sum up to one, and $0< c < 1$. Due to the very high dimension of $G$, various algorithms essentially based on the power method have been proposed to compute the stationary distribution $p$ such that  
\begin{equation}\label{eq:prk}
 p^\t = p^\t G\, ,
\end{equation}
as for instance in \cite{BZ07,BZS05,IK06,extrapolation}. 
As the asymptotic convergence of the power method 
depends on  the magnitude of the subdominant eigenvalue of $G$
, ergodicity coefficients are strongly related with such approach. Several authors have investigated this relation in details \cite{secondoautoval,IK06,seneta-markov,jordan_google}. On the other hand, the original formula by S.\ Brin and L.\ Page \cite{BP98} defines the Pagerank vector $p$ as the solution of a M-matrix linear system of the type
\begin{equation}\label{eq:pagarank-system}
\gamma(I-c T)^\t  p = v, \quad \gamma \in \RR\, ,
\end{equation}
usually referred to as the Pagerank system. We can  recover that linear system by means of Theorem \ref{lin-sis}. Assume for the sake of simplicity that each column of $T$ has at least one zero entry (i.e.\ that there is no node in the graph pointed by all nodes). Since $\max_i \min_j (G^\t)_{ij}=(1-c)\max_i v_i >0$ then 
$$p = (I- \tau_{n-1}(G^\t)B)^{-1}y\, ,$$
where $\tau_{n-1}(G^\t)=1-\sum_i \min_j (G^\t)_{ij}=c$, $y = (1-c)v $ and $B = \frac 1 c (G^\t-y\uno^\t)=T^\t$. This shows, indeed, that $p$ is both the solution of the eigenvector problem \eqref{eq:prk} and of the  linear system \eqref{eq:pagarank-system}, with $\gamma = \frac{1}{1-c}$.

Starting from the  linear system  formulation of the Pagerank problem \eqref{eq:pagarank-system}, some other approaches to compute $p$ have been investigated and compared to the power method, as in \cite{DelCorso,Gleich,GG06,Tudisco2011}.   Therefore Theorem \ref{lin-sis} provides a useful tool as it  allows to approach in a similar way many other  (large scale) problems of the type \eqref{eq:prk}, and reveals a further relation between ergodicity coefficients and the Pagerank centrality. 

\bibliographystyle{plain} 
\bibliography{../../../../library}
\end{document}